\theoremstyle{definition}
\newtheorem{example}{Example}
\theoremstyle{remark}
\newtheorem{claim}{Claim}
\newif\iffinalversion
\newcommand{\newtext}[1]{\iffinalversion%
#1%
\else%
\textcolor{blue}{#1}%
\fi%
}
\newcommand{\oldtext}[1]{\iffinalversion%
\else%
\textcolor{red}{#1}%
\fi%
}%
\newlength{\hatchspread}
\newlength{\hatchthickness}
\newlength{\hatchshift}
\newcommand{\hatchcolor}{}
\tikzset{hatchspread/.code={\setlength{\hatchspread}{#1}},
         hatchthickness/.code={\setlength{\hatchthickness}{#1}},
         hatchshift/.code={\setlength{\hatchshift}{#1}},
         hatchcolor/.code={\renewcommand{\hatchcolor}{#1}}}
\tikzset{hatchspread=3pt,
         hatchthickness=0.4pt,
         hatchshift=0pt,
         hatchcolor=black}
\newcommand{\specialcell}[1]{\ifmeasuring@#1\else\omit$\displaystyle#1$\ignorespaces\fi}
\newcommand{\bestcolumnemph}[1]{\textbf{#1}}
\DeclarePairedDelimiter{\floor}{\lfloor}{\rfloor}
\newcommand{\myproblem}{C2GCP}
\newcommand{\modelBCE}{BCE}
\newcommand{\modelFMT}{FMT}
\newcommand{\modelBecker}{BBA}
\newcommand{\modelGrid}{MLB}
\newcommand{\modelHierarchical}{MM1}
\newcommand{\modelImplicit}{MM2}
\newcommand{\modelOrigami}{MM3}
\begin{document}

\title{Comparative analysis of mathematical formulations for the two-dimensional guillotine cutting problem}

\author[H. Becker et al.]{Henrique Becker\affmark{a,$\ast$}, Mateus Martin\affmark{b}, Olinto Araujo\affmark{c}, Luciana S. Buriol\affmark{a}, and Reinaldo Morabito\affmark{d}}

\affil{\affmark{a}Department of Theoretical Computer Science, Federal University of the Rio Grande do Sul (UFRGS),\\Porto Alegre (91 501-970), Brasil}
\affil{\affmark{b}Institute of Science and Technology, Federal University of S\~ao Paulo (UNIFESP), S\~ao Jos\'e dos Campos (12 247-014), Brazil}
\affil{\affmark{c}Industrial Technical College, Federal University of Santa Maria, Santa Maria, Brasil}
\affil{\affmark{d}Department of Production Engineering, Federal University of São Carlos (UFSCar), São Carlos (13 565-905), Brazil}
\email{hbecker@inf.ufrgs.br [Henrique Becker]; mpmartin@unifesp.br [Mateus Martin]; buriol@inf.ufrgs.br [Luciana S. Buriol]; olinto@ctism.ufsm.br [Olinto Araujo]; morabito@ufscar.br [Reinaldo Morabito]}

\thanks{\affmark{$\ast$}Author to whom all correspondence should be addressed (e-mail: hbecker@inf.ufrgs.br).}

\historydate{Received DD MMMM YYYY; received in revised form DD MMMM YYYY; accepted DD MMMM YYYY}

\begin{abstract}
About ten years ago, a paper proposed the first integer
linear programming formulation for the constrained two-dimensional
guillotine cutting problem (with unlimited cutting stages).
Since, six other formulations followed, five of them
in the last two years. This spike of interest gave no opportunity for
a comprehensive comparison between the formulations. We
review each formulation and compare their empirical results over
instance datasets of the literature. We adapt
most formulations to allow for piece rotation. The
possibility of adaptation was already predicted but not realized by
the prior work. The results show the dominance of pseudo-polynomial
formulations until the point instances become intractable by them,
while more compact formulations keep achieving good primal solutions.
Our study also reveals a small but consistent advantage of the Gurobi
solver over the CPLEX solver in our context; that the choice of solver
hardly benefits one formulation over another; and a mistake in the
generation of the T instances, which should have the same optima with
or without guillotine cuts.
Our study also proposes hybridising the most recent formulation with a prior formulation for a restricted version of the problem.
The hybridisations show a reduction of about 20\% of the branch-and-bound time thanks to the symmetries broken by the hybridisation.
\end{abstract}

\keywords{Knapsack problems; Cutting and packing problems; Two-Dimensional Cutting; Guillotine cuts; Unlimited stages; Constrained demand; Formulation; Integer Linear Programming}

\maketitle

\section{Introduction}

The Constrained Two-dimensional Guillotine Cutting Problem ({\myproblem}) deals with a single rectangular object of size $L \times W$ (also known as \emph{original plate}) and a set $I=\{1,\ldots,m\}$ of rectangular piece types.
Each piece type $i \in I$ is characterized by its size $l_i \times w_i$, profit $p_i$, and maximum number of copies $u_i$ to be produced. 
Since the object cannot accommodate all the pieces, the objective is to select and cut the most valuable subset of pieces.
The {\myproblem} considers that: 
(i) all cuts must be of orthogonal guillotine-type, i.e., the cutting of a rectangle always generates two smaller sub-rectangles (edge-to-edge cuts), without limiting the number of guillotine stages; 
(ii) it produces up to $u_i$ copies of piece type $i \in I$, wherein $u_i < \floor{L/l_i} \floor{W/w_i}$ for at least one piece type $i \in I$ (constrained pattern).
The {\myproblem} is known to be strongly NP-hard \cite{hifi:2004}.
The problem is referred to as unweighted if $p_i := l_i w_i$ for all $i \in I$, and weighted otherwise.
In general, the pieces can be cut from the object in its rotated form $w_i \times l_i$; however, a limitation on the object can require the fixed orientation of pieces.
According to the current typology of the cutting and packing problems, the {\myproblem} belongs to the category of the Single Large Object Placement Problems \cite{wascher:2007}.

In the literature, the approaches usually explore a feature of the {\myproblem}: any solution can be represented by a binary tree, where the root is the object, the leaves are the cut pieces, and the branches are the guillotine cuts.
Two types of search are distinguished to explore this binary tree representation: the top-down and bottom-up strategies.
The former considers that the original and residual objects are successively cut towards the pieces.
The latter considers that the pieces are successively merged towards a larger rectangle that fits in the object. 
These strategies have been embedded into branch-and-bound (B\&B) algorithms.
Top-down B\&B algorithms were proposed in \cite{cw:1977,nicos:1995:hadji_orthogonal,hifi:2007:zissimopoulos}, where the object is assumed as the root node; they differ regarding the lower bounds used to prune non-promising nodes.
Bottom-up B\&B algorithms were proposed in \cite{wang:1983,vasko:1989,viswanathan:1993,cung:2000,dolatabadi:2012,yoon:2013}, where the root is a dummy node and $m$ first-level nodes represent a single copy of each piece type; they tend to put effort in different anti-redundancy strategies to avoid symmetrical solutions.
Recently, \cite{russo:2020} presented a complete survey on more than 90 approaches for the {\myproblem} and related problems.
In addition to the previous tree search methods, they resumed the solution methods for the {\myproblem} in two more classes: dynamic programming (DP) algorithms and heuristic/meta-heuristic approaches.
The unconstrained version of the {\myproblem} assumes $u_i \geq \floor{L/l_i} \floor{W/w_i}$ for all $i \in I$.
This problem has been addressed by DP algorithms in \cite{gilmore:1966,beasley:1985:guillotine,russo:2014}.
However, the adaptation of these algorithms for the {\myproblem} is not a straightforward task, as including vector $u_i$ in the representation significantly increases the state space.
For that purpose, one promising path is to consider state space relaxations to obtain tight upper bounds \cite{nicos:1995:hadji_orthogonal,velasco:2019}. 
Lastly, there is a wide field of approaches on heuristics and meta-heuristics for the {\myproblem}.
One may cite the works of \cite{oliveira:1990,alvarez:2002:tabu,morabito:2010,fayard:1998,wei:2015} -- see \cite{russo:2020} for a detailed description.

In this paper, we examine the mixed-integer linear programming (MILP) formulations proposed for the {\myproblem}.
We are aware of seven recent formulations for the problem \cite{benmessaoud:2008,furini:2016,martin:2020:grid,martin:2020:bottomup,martin:2020:topdown,becker:2021}.
The first formulation was published just over ten years ago and the last five in the last two years.
All these formulations explore different modeling strategies and may inspire the development of new heuristics/meta-heuristics or algorithms based on decomposition.

The main contributions of this paper are
(i) a discussion of the different modeling strategies in Section \ref{sec:02methods}; 
(ii) the computational experiments performed to assess their performance in terms of solution quality and processing time in Section \ref{sec:03results}; and,
(iii) the claim of a flaw in the T instances conjecture about optimal zero-loss guillotine solutions;
(iv) the proposal of hybridised versions of the most recent formulation which break some symmetries and show a consistent improvement in performance.
To the best of our knowledge, there is no approach in the literature comparing the relative computational performance of all these formulations.
We presume to complement the survey of \cite{russo:2020} and beneﬁt OR practitioners interested in approaching the {\myproblem} and related problems. 

\section{Method} \label{sec:02methods}

For the sake of explanation, we chose to aggregate some of the formulations in the same paragraph when they share similar modelling strategies.
We seek to highlight how the interpretation of solutions can lead to very different formulations.

\emph{The {\modelBCE} formulation}, proposed for the Guillotine Strip Packing Problem in~\citet{benmessaoud:2008} and adapted for the {\myproblem} in~\citet{martin:2020:grid}, is based on a theorem that characterizes guillotine patterns and uses coordinates at which pieces may be located.
The theorem states that a pattern is of guillotine type if, and only if, for any region (i.e., sub-rectangle) of the object, at least one of the following conditions is satisfied:
(i) this region contains only a single piece;
(ii) the segments of the piece length in this region on the x-axis consist of at least two disjoint intervals;
and, (iii) the segments of the piece width in this region on the y-axis consist of at least two disjoint intervals.
The formulation is compact in the numbers of variables and constraints with \(O(n^4)\) for the GSPP, where \(n\) is the number of pieces to be packed.
This formulation seems to recall the interval-graph approach of \citet{fekete:1997} for the non-guillotine Orthogonal Packing Problem.

\emph{The {\modelFMT} formulation}, proposed in~\cite{furini:2016}, was inspired by a formulation for the one-dimensional cutting stock problem from~\cite{dyckhoff:1981}.
Given pseudo-polynomial time and space, an instance of the G2KP can be transformed into a bipartite directed acyclic (multi)graph; solving a flow-like problem over such graph is equivalent to solving the original G2KP instance.
The two disjoint and independent sets of vertices are (i) the enumerated plate types and (ii) the enumerated cuts over the plate types.
Each cut \oldtext{vertice}\newtext{vertex} has one incoming edge and one or two outgoing edges.
The head of the incoming edge is the plate \oldtext{vertice}\newtext{vertex} that represents the plate being cut.
The tail of each outgoing edge is a plate \oldtext{vertice}\newtext{vertex} representing a plate produced by the cut.
These are all edges that exist in the graph.
If the cut \oldtext{vertice}\newtext{vertex} has only two incident edges, it represents a trim cut, i.e., a cut that only reduces the size of an existing plate without producing a second plate.
If the cut \oldtext{vertice}\newtext{vertex} has three incident edges, it represents a plate cut into two smaller plates.
As the graph is a multigraph, it allows for parallel edges, representing a cut exactly at the middle of a plate generating two copies of the same plate type.
The aforementioned flow-like problem is as follows.
All edges only allow integer amounts to flow between vertices.
The \oldtext{vertice}\newtext{vertex} representing the original plate type is the only one to start with one flow unit (all other vertices start zeroed).
If a plate \oldtext{vertice}\newtext{vertex} receives any flow amount, it can keep any portion of the flow in the \oldtext{vertice}\newtext{vertex} and freely redistribute the remaining flow among its outgoing edges.
If a cut \oldtext{vertice}\newtext{vertex} receives any flow amount, it \emph{multiplies} the amount of flow received by the number of outgoing edges, and \emph{must} relay the exact amount of flow received to \emph{each} of the outgoing edges, e.g., if a cut \oldtext{vertice}\newtext{vertex} receives two units of flow then each outgoing edge receives two units of flow.
If a plate \oldtext{vertice}\newtext{vertex} represents a plate type of the same dimensions as a piece~\(i\), then each unit of flow kept by the \oldtext{vertice}\newtext{vertex} generates a profit~\(p_i\) constrained to a maximum of~\(u_i \times p_i\).
The problem is deciding how the plate vertices will distribute the flow they receive to maximize said profit.
The {\modelFMT} generates models similar to the graph described and which are solved similarly to the flow-like problem mentioned.
\emph{The {\modelBecker} formulation}, proposed recently in \cite{becker:2021}, is based on the {\modelFMT} formulation but further reduce the model size by employing two more reductions:
(i) similar plate sizes that can only lead to obtaining the same piece multisets are conflated to a single plate size;
(ii) if a plate size is too small to be possible to extract two pieces (of any piece type combination) from it, then all outgoing edges are replaced by edges going directly to obtainable piece-sized vertices (the graph is not bipartite anymore).
The formal description of {\modelBecker} is given in~\cref{sec:hybridisation}, as we give technical details of our hybridisation approach there.

\emph{The {\modelGrid} formulation}, proposed in~\citet{martin:2020:grid}, assumes that each solution can be represented by a sequence of horizontal and vertical guillotine cuts over a two-dimensional grid interpretation of object \(L \times W\).
It was inspired by a formulation for the non-guillotine {\myproblem} from~\citet{beasley:1985:nonguillotine}.
In a {\modelGrid} model, a binary variable \(x_{kij}\) represents the allocation of the left-bottom corner of an piece type~\(k \in \{1,\ldots,m\}\) to a point \((i,j)\) on the object, \(0 \leq i \leq L-l_k\), \(0 \leq j \leq W-w_k\).
Taking into consideration the constraints from~\citet{beasley:1985:nonguillotine}, it ensures a constrained pattern and avoids the overlap between any pair of allocated/cut pieces, which is related to a maximum clique problem.
Then it satisfies the guillotine cutting with binary variables for horizontal cuts \(h_{ii\prime j}\), \(0 \leq i < i\prime \leq L\), \(0 \leq j \leq W\), vertical cuts \(v_{ijj\prime}\), \(0 \leq i \leq L\), \(0 \leq j < j\prime \leq W\), and enabled rectangles \(p_{i_1 i_2 j_1 j_2}\), \(0 \leq i_1 < i_2 \leq L\), \(0 \leq j_1 < j_2 \leq W\).
The main concepts involve associating:
(i) the variables \(x_{kij}\), \(h_{ijj\prime}\) and \(v_{ii\prime j}\) by prohibiting horizontal and vertical cuts on allocated pieces and imposing the allocation of the pieces on cut corners;
(ii) the variables \(h_{ii\prime j}\), \(v_{ijj\prime}\) and \(p_{i_1 i_2 j_1 j_2}\) by allowing only horizontal and vertical edge-to-edge cuts in enabled rectangles.
The formulation is pseudo-polynomial in the numbers of variables and constraints with \(O(mLW+L^2W^2)\).
As expected, one can reduce the number of variables and constraints by using the discretization of normal sets or related ones \citet{herz:1972,cw:1977}.

\emph{The {\modelHierarchical} and {\modelImplicit} formulations}, proposed in~\citet{martin:2020:bottomup}, are inspired in the bottom-up strategy of successive horizontal and vertical builds of the pieces.
A build envelops two small rectangles to generate a larger rectangle.
For instance, as introduced in~\citet{wang:1983}, the horizontal build of pieces \(l_1 \times w_1\) and \(l_2 \times w_2\) provides a larger rectangle of size \((l_1+l_2)\times \max\{w_1, w_2\}\), and the vertical build provides a larger rectangle of size \(\max\{l_1, _2\} \times (w_1+w_2)\).
Defining an {\modelHierarchical} or {\modelImplicit} model, it requires to previously determine an upper bound \(\bar{n}\) to the maximum number of builds on object \(L \times W\) (e.g., \(\bar{n}=\sum_{i \in I} u_i\)).
The {\modelHierarchical} formulation is pseudo-polynomial as its definition requires an explicit binary tree structure, which is generated by a procedure that considers upper bound \(\bar{n}\) as an input.
This binary tree structure is represented by a set of triplets \((j,j^-,j^+)\), where \(j^-$ and $j^+\) are the left and right child nodes of node \(j\), respectively; the root node \(j=1\) represents the object (i.e., the solution).
Its main concepts involve ensuring:
(i) each node \(j\) of the binary tree structure can represent either a copy of an piece type \(i \in I\) (binary variable \(z_{ji}\)), an horizontal build (binary variable \(x_{jh}\)), a vertical build (variable \(x_{jv}\)), or it is not necessary in the solution;
(ii) the solution represents a guillotine pattern (i.e., a virtual binary tree) by linking the variables \(x_{jo}\), \(o \in \{h,v\}\), of each parent node \(j\) with the variables of its child nodes \(j^-\) and \(j^+\); and,
(iii) the variables \(L_j\) and \(W_j\) are considered to represent, respectively, the length and width of a node \(j\) according to the previous definition of horizontal and vertical builds, over variables \(z_{ji}\) and \(x_{jo}\).
The {\modelImplicit} formulation, however, is compact as it considers the set of binary variables \(y_{jk}\), \(j,k \in \{1,\ldots,\bar{n}-1\}\), \(j<k\), for representing implicitly the binary tree structure.
The {\modelHierarchical} and {\modelImplicit} formulations were first proposed as integer non-linear programs, and then they were linearized through the use of disjunctive inequalities of big-M type.

\emph{The {\modelOrigami} formulation}, proposed in~\citet{martin:2020:topdown}, is inspired in the top-down strategy of successive cuts on the original and residual objects towards the pieces.
It makes use of the binary tree structure initially proposed for the {\modelHierarchical} formulation.
As a consequence, it presumes the same constraints for representing a guillotine pattern (i.e., a virtual binary tree) by linking the variables \(x_{jo}\), \(o \in \{h,v\}\), of each parent node \(j\) with the variables of its child nodes \(j^-\) and \(j^+\).
However, its geometric constraints are non-trivial.
Since variables \(L_j\) and \(W_j\) are no longer in the formulation, the sizes of the residual objects (i.e., nodes of the binary tree structure) are defined according to the decisions taken in the previous residual objects.
Alternatively stated, the decisions of each node~\(j\) take into consideration the previous decisions of all its ancestral nodes up to the root node through disjunctive inequalities of big-M type.

\section{Comparison between prior formulations} \label{sec:03results}

The list of instance datasets used in this section follows.
\begin{description}
	\item [CU/CW] Datasets introduced by~\cite{fayard:1998}. Their names stand for Constrained (demand) and Unweighted/Weighted. They totalize 22 instances: CU1--11 and CW1--11.
	\item [APT] Dataset introduced by~\cite{alvarez:2002:tabu}. The whole dataset consists of 40 instances (APT10--49), however, we only use the second half (APT30--49) because the first half is for the unconstrained demand variant. The APT30--39 are unweighted and APT40--49 are weighted.
	\item [FMT59] Group of instances assembled by~\cite{furini:2016} with instance subsets from previous datasets. This dataset was left unnamed, so we reference it by the authors' surnames initials and the number of instances. It contains 37 unweighted instances: A1s, A2s, 2s, 3s, STS2s and STS4s~\cite{cung:2000}; A3--5~\cite{hifi:1997}; CHL1s, CHL2s, and CHL5--7~\cite{cung:2000}, CU1--2 (see above); gcut1--12~\cite{beasley:1985:guillotine}; Hchl\(x\)s (\(x \in \{3, 4, 6, 7, 8\}\))~\cite{cung:2000}; OF1--2~\cite{oliveira:1990}; and W/wang20~\cite{wang:1983,fayard:1998}. As well as 22 weighted instances: A1--2~\cite{hifi:1997}; cgcut1--3~\cite{cw:1977}; CW1--3 (see above); Hchl2/Hchl9~\cite{cung:2000}; HH~\cite{herz:1972,hifi:1997}; okp1--5~\cite{fekete:1997}; and STS2/STS4~\cite{tschoke:1995,alvarez:2002:tabu}.
	\item [Easy18] A subset of FMT59 we defined for this section. Its purpose is to reduce the number of runs needed before we discard a formulation from further consideration. The dataset contains: cgcut1--3, gcut1--12, OF1--2, and wang20.
\end{description}

We selected these datasets because they were already employed by the prior work.
For the CU, CW, and APT datasets, with and without rotation, we use the best known lower bounds from~\cite{velasco:2019}.
For the FMT59 dataset, without rotation, \cite{furini:2016} presents every optimal value\footnote{There is only one typo: the optimal value of the okp2 instance is 22502, not 22503.}, but there is no comprehensive source on the best known values for this dataset when rotation is allowed.

\subsection{Experiments setup}
\label{sec:setup}

Every experiment in this work used the following setup. 
The CPU was an AMD\textsuperscript{\textregistered} Ryzen\textsuperscript{TM} 9 3900X
 12-Core Processor 
and 32GiB of RAM were available. 
The operating system used was Ubuntu 20.04 LTS (Linux 5.4.0).
Two kernel parameters had non-default values: \texttt{overcommit\_memory = 2} and \texttt{overcommit\_ratio = 95}.
Hyper-Threading was disabled.
Each run executed on a single thread, and no runs executed simultaneously.
The computer did not run any other CPU bound task during the experiments.

The models for the {\modelBecker} and {\modelFMT} formulations were built using the Julia language and the Gurobi solver.
The models for the {\modelBCE}, {\modelGrid}, {\modelHierarchical}, {\modelImplicit}, and {\modelOrigami} formulations were built using C++ and the CPLEX solver.
To homogeinize the experiments, these implementations were used only to built the models and then save them to MPS files.
Each selected combination of formulation, rotation configuration, and instance originated a single MPS file.
A Julia script then executed each MPS file in four different configurations: CPLEX/LP, CPLEX/MIP, Gurobi/LP, and Gurobi/MIP.

The implementation of {\modelBecker} and {\modelFMT} formulations is available at an online repository\footnote{See~\url{https://github.com/henriquebecker91/GuillotineModels.jl/tree/0.5.0}}.
The scripts for (i) saving the {\modelBecker} and {\modelFMT} models as MPS and (ii) solving all MPS files are also availables\footnote{See~\url{https://github.com/henriquebecker91/phd/tree/BMC-1}}.
The implementations of all the other formulations, as well as the script for generating the MPS files, are available upon request to the authors. 
The same version of the compilers and solvers was used for the MPS generation and the MPS solving phases.
Those are: Julia 1.5.3, g++ 9.3.0, CPLEX 20.1, and Gurobi 9.1.1.
At least for the solvers, these were the latest versions available.

In both CPLEX and Gurobi some non-default configurations were used.
The solvers were configured to:
employ a single thread;
use a specified seed; 
employ an integer tolerance adequate for the instances; 
and respect an one hour time-limit. 
Only when solving the {\modelFMT} and {\modelBecker} formulations, we use the barrier method for solving the LP and for solving the root node relaxation. 

\subsection{Outline of the experiments}

Each run can be uniquely identified by a combination of instance, formulation, rotation configuration (allow rotation or not), solve mode (MIP or LP), and solver (CPLEX or Gurobi).
The first three characteristics determine a MPS file, the last two determine four distinct runs over the same MPS file.

The whole set of runs consists of:
\begin{enumerate}
\item The Easy18 instances combined with each of the seven considered formulations and both rotation configurations, except by the BCE formulation with rotation enabled, which we did not implement.
\item The CU, CW, and FMT59 instances combined with the {\modelBecker}, {\modelOrigami}, hierachical, and {\modelImplicit} formulations and both rotation configurations.
\item The APT instances combined with the four formulations mentioned above but only with rotation disabled. There were no successful runs with rotation disabled and, therefore, we decided to not spend computational effort in the rotation enabled counterparts. 
\end{enumerate}

\subsection{Comparison between solvers}

The goal of this section is to answer two questions:
(i) is one of the solvers superior in our context?
(ii) does a choice of solver benefit a specific formulation?

\Cref{tab:cplex_vs_gurobi} answers the first question by revealing a small but consistent advantage for the Gurobi solver.
Nevertheless, Gurobi does not completely dominates CPLEX, as each solver had some instances only solved by it.
The columns of~\Cref{tab:cplex_vs_gurobi} are:
\emph{\#opt} -- number of runs finished by optimality;
\emph{\#u. opt} -- number of optimal runs unique to the respective solver (i.e., other solver did not reach optimality);
\emph{\#best} -- number of optimal runs in which the respective solver finished before the other solver (counting the ones not finished by the other solver);
\emph{\#c. best} -- number of clean best times, i.e., optimal runs that took at least one minute for the respective solver and either were not solved by the other solver or it took double the time to solve;
\emph{avg. t.} -- mean run time in seconds (runs ended by timeout or memory exhaustion are counted as taking one hour);
\emph{avg. o. t.} -- mean run time of optimal runs in seconds.

\begin{table}[h]
  \center
  \caption{Comparison amongst CPLEX and Gurobi results.}
  \setlength\doublerulesep{0.05\baselineskip}
  \begin{tabular}{lcrrrrrr}
    \hline\hline
    \textbf{Solver} & \textbf{Type} & \textbf{\#opt} & \textbf{\#u. opt} & \textbf{\#best} & \textbf{\#c. best} & \textbf{avg. time} & \textbf{avg. s. time} \\\cmidrule(lr){1-2}\cmidrule(lr){3-8}
     CPLEX & MIP & 288 & 12 &  96 & 16 & 2435.86 & 455.21 \\
    Gurobi & MIP & 302 & 26 & 218 & 63 & 2339.84 & 353.62 \\
     CPLEX & LP  & 704 &  4 & 194 &  6 &  379.17 &  40.62 \\
    Gurobi & LP  & 720 & 20 & 530 & 24 &  297.79 &  31.78 \\\hline\hline
  \end{tabular}
  \label{tab:cplex_vs_gurobi}
\end{table}

\begin{table}[h]
  \center
  \caption{Comparison amongst CPLEX and Gurobi results by formulation.}
  \setlength\doublerulesep{0.05\baselineskip}
  \begin{tabular}{lrrrrrrr}
    \hline\hline
    \textbf{Measure} & \textbf{\modelBCE} & \textbf{\modelBecker} & \textbf{\modelFMT} & \textbf{\modelGrid} & \textbf{\modelHierarchical} & \textbf{\modelImplicit} & \textbf{\modelOrigami} \\\hline
    Optimal & 105.88 &  99.31 & 131.57 & 150.00 & 100.00 & 100.00 & 101.24 \\
    T. Time &  85.45 & 101.75 &  74.71 &  58.61 &  45.09 &  27.68 &  67.60 \\\hline\hline
  \end{tabular}
  \label{tab:percentages_gurobi_cplex}
\end{table}

\Cref{tab:percentages_gurobi_cplex} answers the second question.
It presents the percentage of solved runs and total time spent by Gurobi in relation to CPLEX, broken down by formulation, for all MILP runs.
Therefore, in the first row, figures above 100\% mean Gurobi solved more runs than CPLEX and, in the second row, figures below 100\% mean Gurobi spent less time than CPLEX (runs ended by time or memory limit are counted as taking one hour).
Gurobi have better results for all formulations except the {\modelBecker} formulation in which the results are very similar (only slightly worse).
The choice of Gurobi as solver improve the results for some formulations more than others but, in general, the formulations which solve less instances are the most benefited.
Therefore, we consider Gurobi a fair choice for the rest of the paper

\subsection{Comparison between formulations}

The goal of this section is to provide empirical evidence for the choice of one formulation over other, and to identify the impact of allowing rotation over all formulations.
Given the number of considered formulations, we use \Cref{tab:easy18} to filter the considered formulations further.
The columns of~\Cref{tab:easy18} are similar to the ones present in \Cref{tab:cu_cw} and \Cref{tab:fmt59_apt}; exceptions are noted close to each table.
The explanation of these columns follows:
\#o -- number of runs finished by optimality;
\(g_{lb}\) -- the average percentage gap between the best lower bound found and the best known lower bound (if the run finishes without a solution, as is the case of memory exhaustion, we assume a trivial empty solution was returned);
\(t_s\) -- the average total time spent by a run in seconds (both timeout and memory exhaustion count as one hour);
\(g_{ub}\) -- the average percentage gap between the continuous relaxation and the best known lower bound;
\#f -- the number of runs finished by timeout or memory exhaustion during the root node relaxation phase (these are excluded from \(g_{ub}\)).

\begin{table}[h]
  \center
  \caption{Filtering formulations with EASY18 dataset.}
  \setlength\doublerulesep{0.05\baselineskip}
  \begin{tabular}{lrrrrrrrrrr}
    \hline\hline
    & \multicolumn{5}{c}{Fixed} & \multicolumn{5}{c}{Rotation} \\
    \cmidrule(lr){2-6}\cmidrule(lr){7-11}
    Method & \#o & \(g_{lb}\) & \(t_s\) & \(g_{ub}\) & \#f & \#o & \(g_{lb}\) & \(t_s\) & \(g_{ub}\) & \#f \\
    {\modelBCE} & 2 & 7.00 & 3341 & 7.31 & 0 & -- & -- & -- & -- & -- \\
    {\modelBecker} & 18 & 0.00 & \(>1\) & 1.74 & 0 & 18 & 0.00 & \(>1\) & 0.63 & 0 \\
    {\modelFMT} & 13 & 27.78 & 1336 & 2.48 & 4 & 10 & 44.44 & 1723 & 0.71 & 7 \\
    {\modelGrid} & 10 & 33.62 & 1671 & 3.85 & 4 & 3 & 78.23 & 3002 & 3.22 & 9 \\
    {\modelHierarchical} & 16 & 0.08 & 750 & 7.31 & 0 & 14 & 0.31 & 1082 & 3.32 & 0 \\
    {\modelImplicit} & 10 & 0.33 & 1684 & 7.31 & 0 & 9 & 0.45 & 1885 & 3.32 & 0 \\
    {\modelOrigami} & 16 & 0.07 & 685 & 7.31 & 0 & 12 & 0.43 & 1297 & 3.32 & 0 \\\hline\hline
  \end{tabular}
  \label{tab:easy18}
\end{table}

\begin{table}[h]
  \center
  \caption{Solving datasets CU and CW}
  \setlength\doublerulesep{0.05\baselineskip}
  \begin{tabular}{lrrrrrrrrrrrrrrrr}
    \hline\hline
    & \multicolumn{8}{c}{CU} & \multicolumn{8}{c}{CW} \\
    \cmidrule(lr){2-9}\cmidrule(lr){10-17}
    & \multicolumn{4}{c}{Fixed} & \multicolumn{4}{c}{Rotation} & \multicolumn{4}{c}{Fixed} & \multicolumn{4}{c}{Rotation}\\
    \cmidrule(lr){2-5}\cmidrule(lr){6-9}\cmidrule(lr){10-13}\cmidrule{14-17}
    Alg. & \#o & \(g_{lb}\) & \(t_s\) & \(g_{ub}\) & \#o & \(g_{lb}\) & \(t_s\) & \(g_{ub}\) & \#o & \(g_{lb}\) & \(t_s\) & \(g_{ub}\) & \#o & \(g_{lb}\) & \(t_s\) & \(g_{ub}\) \\
    {\modelBecker} & 10 & 9.09 & 425 & 0.21 & 9 & 18.18 & 716 & 0.06 & 11 & 0.00 & 15 & 1.24 & 10 & 0.00 & 496 & 1.72 \\
    {\modelHierarchical} & 3 & 0.54 & 2928 & 1.45 & 0 & 0.68 & 3600 & 0.57 & 5 & 0.00 & 2560 & 11.13 & 3 & 0.01 & 3052 & 5.26 \\
    {\modelImplicit} & 0 & 0.80 & 3600 & 1.45 & 0 & 0.88 & 3600 & 0.57 & 0 & 0.89 & 3600 & 11.13 & 0 & 0.51 & 3600 & 5.26 \\
    {\modelOrigami} & 3 & 0.78 & 3021 & 1.45 & 2 & 0.97 & 3400 & 0.57 & 5 & 0.00 & 2602 & 11.13 & 2 & 0.80 & 3259 & 5.26 \\\hline\hline
  \end{tabular}
  \label{tab:cu_cw}
\end{table}

\Cref{tab:easy18} shows that, for the EASY18 dataset, {\modelBecker} dominates all other formulations.
The {\modelGrid} has the largest average lower bound gap.
The model size often prevents its runs from finishing solving the root node relaxation.
The same problem is also seen in {\modelFMT} runs but to a smaller extent.
{\modelBCE} solves the least instances, its lower bound gap is smaller than {\modelFMT} and {\modelGrid} but considerably above the rest of the instances.
{\modelHierarchical} and {\modelOrigami} solve most instances and have very small lower bound gaps.
Finally, {\modelImplicit} solves an amount of instances comparable to {\modelFMT} and {\modelGrid} but, different from them, the root node relaxation is always solved and a good primal solution too.

Considering these results, the authors chose to remove {\modelBCE}, {\modelGrid}, and {\modelFMT} from further comparison.
The rationale for these choices follows: {\modelBCE} solves very few instances leading to a great increase in experiment times; the model size of {\modelGrid} leads to memory problems, especially for runs allowing rotation; and {\modelFMT} is similar to {\modelBecker} but without some additional enhancements (see~\Cref{sec:02methods}).

In \Cref{tab:cu_cw}, we see two distinct behaviors emerge.
The {\modelBecker} (pseudo-polynomial) starts to present a behavior similar to {\modelFMT}: either solving the instances faster than the other formulations, or failing to solve the root node relaxation at all\footnote{The table ommits it but {\modelBecker} fails to solve the root node relaxation one time for CU/Fixed and two times for CU/Rotation.}.
The other three formulations have difficulty to prove optimality, however they always solve the root node relaxation and provide primal solutions of good quality.
The \(g_{ub}\) column indicates that {\modelHierarchical}, {\modelImplicit}, and {\modelOrigami} have the same average upper bound gap.
The reason for this similarity is that the three formulations, while distinct, make use of the same additional constraints to tighten the upper bound to a precomputed value.
In all three formulations, these constraints impose a tighter bound than the one imposed by the remainder of the formulation, leading to this similarity.
The problem becomes harder for all formulations if rotation is allowed.
The values in the \(g_{ub}\) column for {\modelHierarchical}, {\modelImplicit}, and {\modelOrigami} reduce when rotation is allowed, however this happens only because their upper bounds stay the same while the best known solution increases in value.

\begin{table}[h]
  \center
  \caption{Solving datasets FMT59 and APT}
  \setlength\doublerulesep{0.05\baselineskip}
  \begin{tabular}{lrrrrrrrrrrrrrrr}
    \hline\hline
    & \multicolumn{10}{c}{FMT59} & \multicolumn{5}{c}{APT} \\
    \cmidrule(lr){2-11}\cmidrule(lr){12-16}
    & \multicolumn{5}{c}{Fixed} & \multicolumn{5}{c}{Rotation} & \multicolumn{5}{c}{Fixed} \\
    \cmidrule(lr){2-6}\cmidrule(lr){7-11}\cmidrule(lr){12-16}
    Method & \#o & \(g_{lb}\) & \(t_s\) & \(g_{ub}\) & \#f & \#o & \(g_{lb}\) & \(t_s\) & \(g_{ub}\) & \#f & \#o & \(g_{lb}\) & \(t_s\) & \(g_{ub}\) & \#f \\
    {\modelBecker} & 57 & 1.69 & 183 & 1.75 & 1 & 56 & 1.05 & 233 & 3.28 & 1 & 0 & 100.00 & 3600 & -- & 20 \\
    {\modelHierarchical} & 27 & 1.00 & 2387 & 4.89 & 0 & 20 & -1.16 & 2657 & 4.66 & 0 & 0 & 11.32 & 3601 & 1.86 & 0 \\
    {\modelImplicit} & 10 & 1.40 & 3015 & 4.89 & 0 & 9 & -1.04 & 3077 & 4.66 & 0 & 0 & 3.10 & 3600 & 1.86 & 0 \\
    {\modelOrigami} & 25 & 1.39 & 2328 & 4.89 & 0 & 13 & -0.74 & 2837 & 4.66 & 0 & 0 & 90.39 & 3509 & 1.90 & 9 \\\hline\hline
  \end{tabular}
  \label{tab:fmt59_apt}
\end{table}

\Cref{tab:fmt59_apt} corroborates the findings of~\Cref{tab:cu_cw}.
{\modelBecker} solves more FMT59 instances, but ends up with a larger \(g_{lb}\) than the other formulations because of the poor solution quality in the few unsolved instances.
For the FMT59 instances, {\modelHierarchical} has the lowest~\(g_{lb}\) but, for the APT instances {\modelImplicit} surpasses it.
The {\modelBecker} is unable to solve the root node relaxation for any of the APT instances during the one hour time limit.
The column~FMT59/Rotation/\(g_{lb}\) has negative values because, as mentioned in~\Cref{sec:03results}, we use the known optima from fixed orientation for this particular dataset.

\subsection{About the T instances from \cite{hopper_thesis}}

In a preliminary experiments phase, the authors have considered the instances of the datasets N and T from \cite{hopper_thesis}. 
Both datasets have 35 instances each, and were generated by recursively cutting a 200x200 plate while following some rules. 

In \cite{hopper_thesis}, the instances were used to evaluate heuristics for the Strip Packing Problem.
The instances have a width of 200 and their optimal height is 200 (a pattern with no waste).
Hence, the optimality gap could be easily computed.

The main distinction between datasets T and N is that: dataset T used only guillotine cuts in the generation process; dataset N also replaced some pieces by the usual 5-piece pattern which is impossible to cut using only guillotine cuts (see the pieces 12, 13, 14, 15, and 16 in \Cref{fig:T1a}).

\begin{figure}[h]
  \center
  \includegraphics[width=0.6\linewidth]{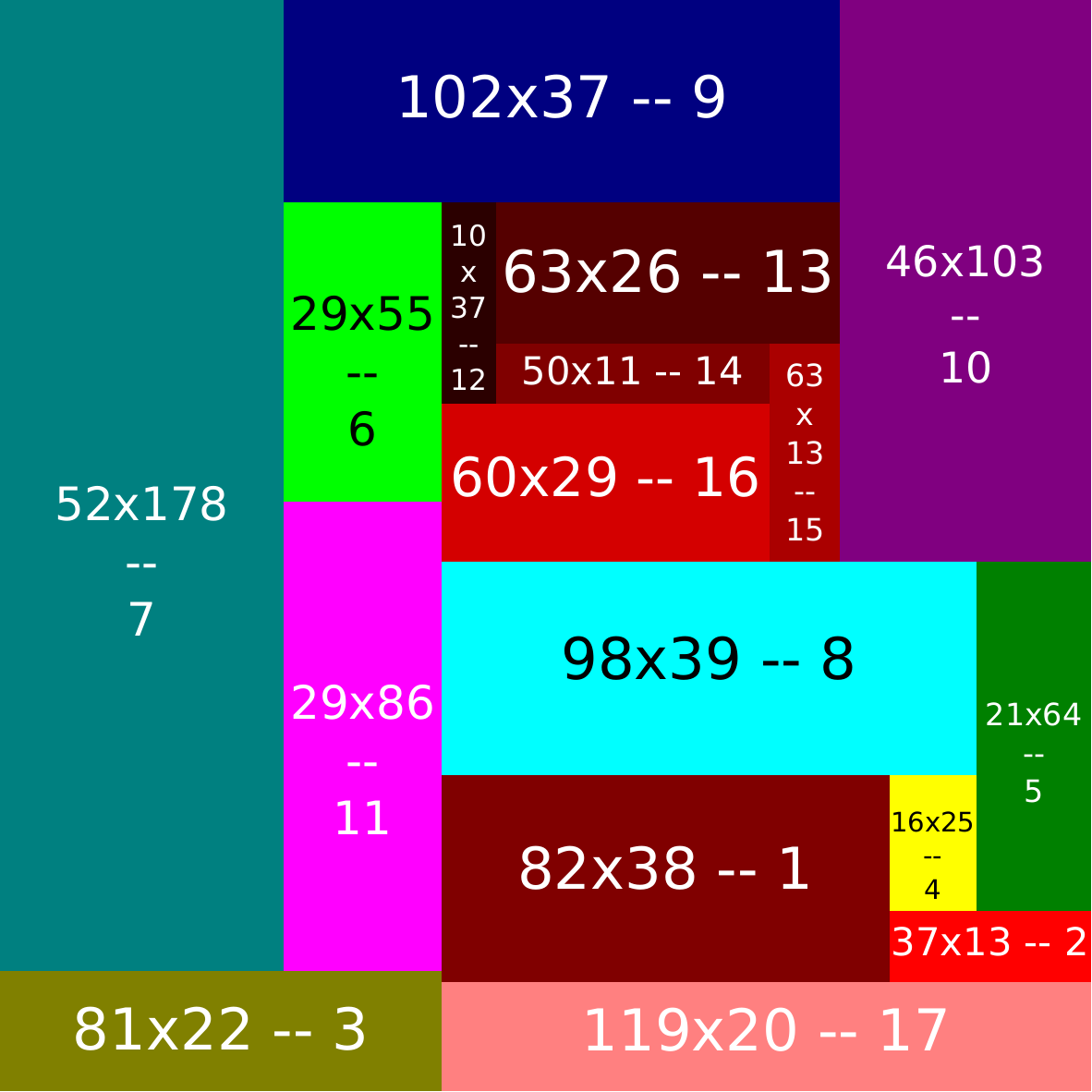}
  \caption{A non-guillotinable optimal solution for instance T1a.}
  \label{fig:T1a}
\end{figure}

In our preliminary experiments, we adapted the instances to the knapsack problem by adopting \(L = 200\) and \(p_i = 1, \forall i \in 1..n\).
Even with a three hours time limit we optimally solved only 8 instances of dataset T (T1a, T1b, T1c, T1d, T1e, T2c, T2e, T3a) but, for all of them, we obtained a value smaller than the number of pieces in the instance.
This unexpected result led us to manual analysis of the instances which revealed a mistake in their generation.
The instances of the T dataset seem to have been generated using the same generator used for the N instances.
Consequently, there is no guarantee of a wasteless solution using only guillotine cuts.
In fact, for the first instance of the dataset T (T1a), we can provide proof it is actually impossible to obtain a guillotine packing of all pieces.

\begin{claim}
The entire piece set of the instance T1a cannot be packed into a space of 200x200 using only guillotine cuts.
\end{claim}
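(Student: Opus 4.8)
The plan is to reduce the claim to a finite, checkable case analysis on the location of the \emph{first} guillotine cut. First I would observe that the total area of the piece set of T1a equals $200 \times 200 = 40000$ (the instance was produced by recursively cutting the $200\times200$ plate, so the pieces partition it). Consequently every feasible packing of the whole piece set into the $200\times200$ square is necessarily a zero-waste tiling: the pieces cover the square with no overlap and no unused space. This turns the statement into a purely combinatorial-geometric question, namely whether there exists a guillotine tiling of the square by these pieces.

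Next I would invoke the recursive characterization of guillotine patterns. In any guillotine packing the whole plate is split by a \emph{first} edge-to-edge cut, i.e. a straight segment --- either vertical at some $x=c$ or horizontal at some $y=c$ --- that spans the full length of the plate and does not cross the interior of any placed piece. Such a cut partitions the plate into two sub-rectangles, each of which must itself admit a guillotine tiling by the pieces assigned to it. For a vertical cut at $x=c$ to exist, the pieces must split into a left group that exactly tiles the $c\times200$ rectangle and a right group that exactly tiles the $(200-c)\times200$ rectangle; in particular the left group must have total area exactly $200c$, and symmetrically for a horizontal cut.

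The core of the argument is then to enumerate the candidate cut coordinates and eliminate each. I would restrict $c$ to the finite set of coordinates that are simultaneously reachable as sums of piece widths (resp.\ heights) and for which some subset of pieces has total area exactly $200c$; the specific dimensions of T1a make this set very small. For each surviving candidate I would derive a contradiction, either by a subset-sum/area argument (no subset of pieces attains the required area on one side of the cut) or by showing that some piece must straddle every admissible cut line. I expect the decisive structural fact to be that, in every tiling, the central pieces (pieces 12--16 of \Cref{fig:T1a}) are forced into the pinwheel arrangement that is itself non-guillotinable and sits across the middle of the plate, so that no full-length first cut can avoid all piece interiors.

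The hard part will be the forcing/uniqueness step: proving that the awkwardly sized pieces admit essentially no alternative placement, so that the enumeration of first cuts is genuinely exhaustive rather than merely suggestive. A clean way to keep this finite is to work only with the \emph{normal} (discretization) coordinates of \citet{herz:1972,cw:1977}, since any guillotine cut can be shifted to such a coordinate without changing the pattern; this bounds both the number of candidate first cuts and the number of sub-rectangles that must be examined, after which each case closes by an area or subset-sum contradiction.
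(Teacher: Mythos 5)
Your opening reduction is the same as the paper's: the piece areas sum to $200\times 200$, so any full packing is necessarily a zero-waste tiling and the claim becomes the nonexistence of a guillotine tiling. From there, however, what you give is a plan rather than a proof, and the plan hinges on a step you yourself flag as not done. Enumerating candidate first cuts is sound in principle, but eliminating a candidate requires more than checking that no piece subset attains the required area on one side; you must recurse into both sub-rectangles, and you neither bound nor execute that recursion for a single case. More seriously, your proposed ``decisive structural fact'' --- that pieces 12--16 are forced into the non-guillotinable pinwheel arrangement in \emph{every} tiling of the square --- is a strong uniqueness claim about all tilings that you do not establish and that the argument does not actually need. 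The figure in the paper exhibits only \emph{one} non-guillotinable tiling; nothing in your proposal rules out rearrangements in which those five pieces are placed differently, so the ``exhaustive'' enumeration remains, in your own words, merely suggestive.

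The paper instead closes the argument with a localized obstruction that your outline never reaches. Piece 7 ($52\times 178$) leaves a height-$22$ residual in its column, and piece 3 ($81\times 22$) is the only piece --- and the only achievable subset sum --- of height $22$, so piece 3 must sit above or below piece 7. Separating the two by a horizontal guillotine cut fails because the remainder of the $200\times 22$ strip beside piece 3 cannot be filled (again, no other heights sum to $22$); hence pieces 7 and 3 must be isolated by a vertical cut creating a width-$81$ strip, and the $29$-wide gap beside piece 7 inside that strip can only be attacked by pieces 6 ($29\times 55$), 11 ($29\times 86$), and the pair 4 and 15 (widths $16+13=29$), whose combined area is insufficient. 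To make your proof work you would either have to find and carry out a concrete, piece-specific forcing argument of this kind, or genuinely complete the first-cut enumeration together with all of its recursive sub-cases; neither is present in the proposal.
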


\begin{proof}

The desired pattern can only be wasteless as the summed area of the pieces equals to the available area.
Some piece (or piece subset) has to be positioned below/above piece 7 (52x178) for a pattern to have no waste.
The only piece of height 22 is piece 3 (81x22) and no subset of pieces sums up height 22.
As no other pieces sum up exactly height 22, we cannot start cutting the plate using a horizontal cut separating pieces 7 and 3: the space by the sides of piece 3 would have some waste.

The only remaining option is a vertical cut separating pieces 7 and 3 from the rest of the plate.
This cut must create a plate of width 81, so piece 3 can be obtained by means of a subsequent horizontal cut with no waste.
Consequently, the desired pattern must have a subpattern of size 81x179 consisting of piece 7 and any other pieces (except piece 3) and with no waste.
Such subpattern is impossible to obtain.
The pieces able to fit by piece 7 sides are: 6 (29x55) and 11 (29x86), both with the exact width, and also the combination of pieces 4 (16x25) and 15 (13x40), that side-by-side sum width 29.
Other pieces of small width (e.g., 5 and 12) cannot combine in a way that sum width 29.
These pieces are insufficient to fill the gap.
Therefore, there is no guillotinable pattern able to pack the entire piece set within the available area. \qed

\end{proof}

As far as we know, no previous paper has made this claim.
For example, \cite{what_matters} is from 2018 and makes the usual distinction between datasets N and T as non-guillotineable and guillotineable.
The closest claim we could find was from~\cite{wei:2014} (from 2014) which says:
\begin{quote}
``The set T consists of 70 instances whose known perfect packing follow guillotine-cut constraint. However, when we follow the method described by Hopper (2000) to restore the known perfect packing for T instances, we found the known perfect packing clearly does not follow the guillotine-cut constraint.''
\end{quote}
Unfortunately, this claim gets the number of instances wrong, so we are not sure if this was a typo or if they mixed T and N instances.
Much of the prior work that solved instances from T dataset was either focused on heuristic methods or exact non-guillotine methods.
Examples of such works are
\cite{alvarez:2008} (non-guillotine heuristic founds optimal solution for smaller T instances) and 
\cite{wei:2011} (non-guillotine heuristic).
We belive this explains why this discrepancy was not noticed sooner.

Some works may have been impacted by the mistake in the dataset generation.
\cite{bortfeldt:2012} 
proposes a tree-search algorithm that respects the guillotine constraint.
The work compares optimality gaps of the proposed method and other non-guillotine methods.
The work assumes all methods could reach the best values for such instances, which is not true.
Consequently the optimality gaps are wrong and unfair to the proposed method.
Fortunately, the work does not use only the T dataset so the problem is mitigated.
Other works that appear to be in the same circumnstances are
\cite{thomas:2014} and 
\cite{shang:2020}. 

Finally, even if our hypothesis about the generation mistake is correct, we cannot claim every instance in dataset T is impossible to fit into a 200x200 space using only guillotine cuts.
The procedure used to generate dataset N (and, in our hypothesis, dataset T too) does not guarantee that guillotine and non-guillotine methods reach the same optimal solution value: the procedure only makes very improbable for it to happen.


\section{Hybridisation with the restricted formulation}
\label{sec:hybridisation}

This section proposes another symmetry-breaking change compatible with the FMT and BBA formulations explained in~\cref{sec:02methods}. 
We are unaware of any previous application of the proposed change to unrestricted 2D guillotine problems.
The Cut-Position enhancement from~\citet{furini:2016} draws inspiration from the same broad idea: to get closer to a formulation for the (simpler) restricted problem while keeping optimality for the unrestricted problem.
However, the proposed change and the Cut-Position both approach this goal in distinct and complementary ways.

\subsection{The restricted problem and piece-outlining cuts}

A guillotine cutting problem is said to be \emph{restricted} if (i) each horizontal (vertical) guillotine cut must match the length (width) of a piece that fits into the plate, i.e., it happens at a \emph{restricted cut position}, and (ii) a piece of that length (width) is guaranteed to be obtained from the first child plate.
The concept of a \emph{restricted} variant appears first in the context of the three-staged guillotine cutting problem.
The two-staged problem is inherently restricted: a cut that does not match the outline of a piece, or a cut that does not guarantee a piece extraction because it is not paired with a cut from the only other stage, is a cut that will not help to obtain any pieces before the two stages are over.
Only when the number of stages is three or more that an optimal solution for the unrestricted problem may require cuts without such immediate purposes.
Applying the concept of \emph{restricted} to unlimited stages is not new, \citet{furini:2016} already does it.
\citet{furini:2016} also presents an intermediary variant which respects (i) but not (ii), this variant can be referred to as \emph{position-only} restricted problem.

The restricted problem has at least two performance advantages over the unrestricted problem.
The first advantage is related to the number of restricted cut positions: the number of cuts positions in any plate is bounded by the number of pieces (i.e., linear on the input) and not pseudo-polynomial (i.e., bounded by plate dimensions), even if the number of plates themselves is still pseudo-polynomial.
The second advantage is related to the piece extraction requirement.
There is no optimality loss if, after a cut at a restricted position related to a single piece, it is immediately determined that, if necessary, the first child plate will be cut again in the next stage to obtain the respective piece.
The possibility of joining two decision variables together has led previous prior on the restricted problem, as \citet{silva:2010}, to redefine \emph{cut} to mean \emph{one or two guillotine cuts associated a priori to a piece type and which outline and obtain a piece-sized plate that cannot be further cut}.
The guillotine cuts considered until now may incidentally outline and obtain a piece-sized plate as their child plates.
However, they are not a priori associated with a single piece type, nor do they guarantee their first child plate (if piece-sized) cannot be further cut.
In this chapter, the text distinguishes between these two kinds of cuts to avoid confusion.
The single and unassociated cuts considered until now will be referred to as \emph{basic guillotine cuts} (or BGCs for short), and this new definition of cut will be referred to as \emph{piece-outlining cuts} (or POCs for short).
\Cref{fig:piece_outlining_cut} may help to visualise the \emph{piece-outlining cuts}

\begin{figure}[h]
  \caption{Piece-outlining cuts}
  \center
  \begin{tikzpicture}[scale=0.145]
\def\piececolor{gray!20}
\def\labelxshift{12.5}
\def\labelyshift{0}
\def\labelfontsize{\normalsize}
\begin{scope}[shift={(0, 0)}] 
\begin{scope}[shift={(0, 0)}] 

\fill[\piececolor] (0, 0) rectangle +(15, 10);
\draw[thick, black] (0, 10) -- (25, 10);
\draw[dashed, thick, black] (15, 0) -- (15, 10);
\draw (0,0) rectangle +(25, 25);

\node [align=center, font=\labelfontsize\selectfont] at (7.5, 5) {\labelfontsize piece}; 
\node [align=center, font=\labelfontsize\selectfont] at (12.5, 17.5) {\labelfontsize top residual};
\node [align=center, font=\labelfontsize\selectfont] at (20, 5) {right\\residual};

\node [below] at (\labelxshift, \labelyshift) {\labelfontsize Horizontal-first POC};
\end{scope}

\begin{scope}[shift={(30, 0)}] 

\fill[\piececolor] (0, 0) rectangle +(15, 10);
\draw[dashed, thick, black] (0, 10) -- (15, 10);
\draw[thick, black] (15, 0) -- (15, 25);
\draw (0,0) rectangle +(25, 25);

\node [align=center, font=\labelfontsize\selectfont] at (7.5, 5) {\labelfontsize piece}; 
\node [align=center, font=\labelfontsize\selectfont] at (7.5, 17.5) {\labelfontsize top residual};
\node [align=center, font=\labelfontsize\selectfont] at (20, 12.5) {right\\residual};

\node [below] at (\labelxshift, \labelyshift) {\labelfontsize Vertical-first POC};
\end{scope}

\begin{scope}[shift={(60, 0)}] 

\fill[\piececolor] (0, 0) rectangle +(15, 10);
\draw[thick, black] (0, 10) -- (15, 10);
\draw (0,0) rectangle +(15, 25);

\node [align=center, font=\labelfontsize\selectfont] at (7.5, 5) {\labelfontsize piece}; 
\node [align=center, font=\labelfontsize\selectfont] at (7.5, 17.5) {\labelfontsize top residual};
\end{scope}

\begin{scope}[shift={(76, 0)}] 

\fill[\piececolor] (0, 0) rectangle +(15, 10);
\draw[thick, black] (15, 0) -- (15, 10);
\draw (0,0) rectangle +(25, 10);

\node [align=center, font=\labelfontsize\selectfont] at (7.5, 5) {\labelfontsize piece}; 
\node [align=center, font=\labelfontsize\selectfont] at (20, 5) {right\\residual};
\end{scope}
\node [below, align=center, font=\labelfontsize\selectfont] at (71.5, 0) {\labelfontsize Single-cut POCs};

\end{scope}
\end{tikzpicture}
  \label{fig:piece_outlining_cut}
\end{figure}
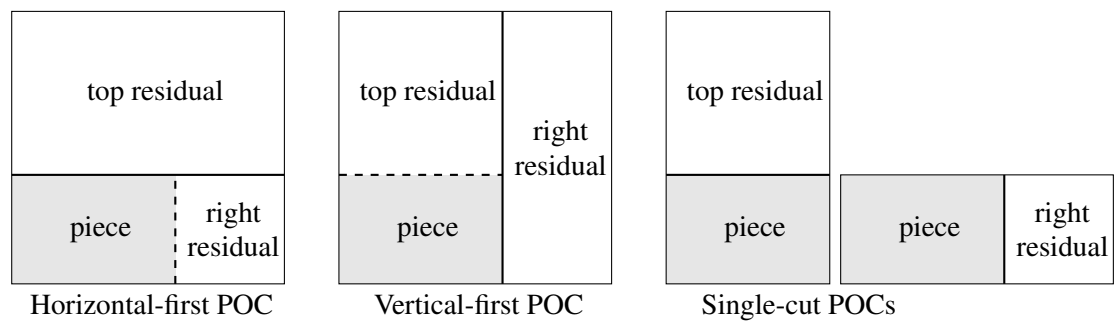

While a POC constituted by two BGCs may be considered a single decision by a solving method and may be seen as happening in succession, in practice, stage restrictions may change the order a cutting machine performs them.
However, these real-world details do not impact the modelling and will not be discussed in this chapter.
Essentially, each piece type that fits into a plate has two POCs associated with it.
One POC that does the horizontal guillotine cut first and then obtains the piece from the first child plate through a vertical cut (if necessary).
This POC always leaves a \emph{top residual plate} (second child plate of the first cut) and often a \emph{right residual plate} (second child plate of the second cut).
The other POC is the same, except that the vertical cut is done first (i.e., always leaving a right residual and often a top residual plate).
Finally, the piece-sized plate obtained by a POC is the first child plate of the second cut if the second cut exists; otherwise, just the first child plate of the only cut.
The piece-sized plate is either immediately regarded as an obtained piece (already enforcing a rule of the restricted problem) or may be considered waste (e.g., the cutting stock problem often allows piece overproduction).
However, the piece-sized plate is \emph{never} treated as an intermediary plate that could be further cut.

A caveat of the coupled representation mentioned above is that, for some instances of the restricted problem, the number of POCs may be larger than the number of restricted cut positions.
In general, each piece type that fits into a plate has two POCs\footnote{The exception happens when the piece type shares the length or the width with the plate and, consequently, both POCs are equivalent and can be considered the same.} (vertical-first and horizontal-first).
An horizontal (vertical) BGC at a restricted position is shared by all piece types with the same length (width).
However, the main advantage of the coupled representation comes from breaking symmetries, not reducing the number of variables.

The POCs are a natural choice for the \emph{restricted} problem but not for the \emph{unrestricted} problem for mostly two main reasons.
The first reason is that, in the restricted problem, each horizontal (vertical) cutting position shares length (width) with at least one piece.
However, in the unrestricted problem, some cutting positions can only be reached by combining many pieces.
The second reason is that the definition of the \emph{restricted} problem guarantees that employing only POCs cannot lead to optimality loss; the same is not true for the unrestricted problem (see \cref{fig:distinctions_restricted_unrestricted}).

\begin{figure}[h]
  \caption{Distinctions between, restricted, position-only restricted, and unrestricted problems. The restricted problem cannot obtain the unrestricted optimal solution. If the first cut happens at a restricted position, the child plates cannot fit the six pieces of the optimal solution, regardless of the piece chosen to be obtained first from the original plate and the orientation of the first cut employed. The position-only restricted problem can obtain the unrestricted optimal solution if, by chance, there is an unpacked piece with a width that matches the necessary vertical cut; otherwise, the solution is also out of reach.}
  \center
  \begin{tikzpicture}[scale=0.175]
\def\piececolor{gray!20}
\def\labelxshift{12.5}
\def\labelyshift{0}
\def\labelfontsize{\normalsize}

\begin{scope}[shift={(0, 0)}] 
\begin{scope}[shift={(0, 0)}] 
\draw (0,0) rectangle +(25, 25);

\draw[fill=\piececolor] (14,0) rectangle +(6, 19);
\draw[fill=\piececolor] (20,0) rectangle +(5, 18);
\draw[fill=\piececolor] (0,0) rectangle +(9, 21);
\draw[fill=\piececolor] (9,0) rectangle +(5, 20);
\draw[fill=\piececolor] (15,19) rectangle +(10, 6);
\draw[fill=\piececolor] (0,21) rectangle +(12, 3);

\draw[dashed, thick, black] (0, 21) -- (25, 21);
\draw[dashed, thick, black] (9, 0) -- (9, 25);

\node [below] at (\labelxshift, \labelyshift) {\labelfontsize Restricted};

\draw (0,0) rectangle +(25, 25);
\end{scope}

\begin{scope}[shift={(30, 0)}] 
\draw (0,0) rectangle +(25, 25);

\draw[fill=\piececolor] (14,0) rectangle +(6, 19);
\draw[fill=\piececolor] (20,0) rectangle +(5, 18);
\draw[fill=\piececolor] (0,0) rectangle +(9, 21);
\draw[fill=\piececolor] (9,0) rectangle +(5, 20);
\draw[fill=\piececolor] (15,19) rectangle +(10, 6);
\draw[fill=\piececolor] (0,21) rectangle +(12, 3);

\draw[dashed, very thick, black] (0.1,0.1) rectangle +(13.9, 13.9);
\draw[dashed, thick, black] (14, 0) -- (14, 25);

\node [below] at (\labelxshift, \labelyshift) {\labelfontsize Position-only Restricted};
\end{scope}

\begin{scope}[shift={(60, 0)}] 
\draw (0,0) rectangle +(25, 25);

\draw[fill=\piececolor] (14,0) rectangle +(6, 19);
\draw[fill=\piececolor] (20,0) rectangle +(5, 18);
\draw[fill=\piececolor] (0,0) rectangle +(9, 21);
\draw[fill=\piececolor] (9,0) rectangle +(5, 20);
\draw[fill=\piececolor] (15,19) rectangle +(10, 6);
\draw[fill=\piececolor] (0,21) rectangle +(12, 3);

\draw[dashed, thick, black] (14, 0) -- (14, 25);

\node [below] at (\labelxshift, \labelyshift) {\labelfontsize Unrestricted};
\end{scope}

\end{scope}
\end{tikzpicture}
  \label{fig:distinctions_restricted_unrestricted}
\end{figure}
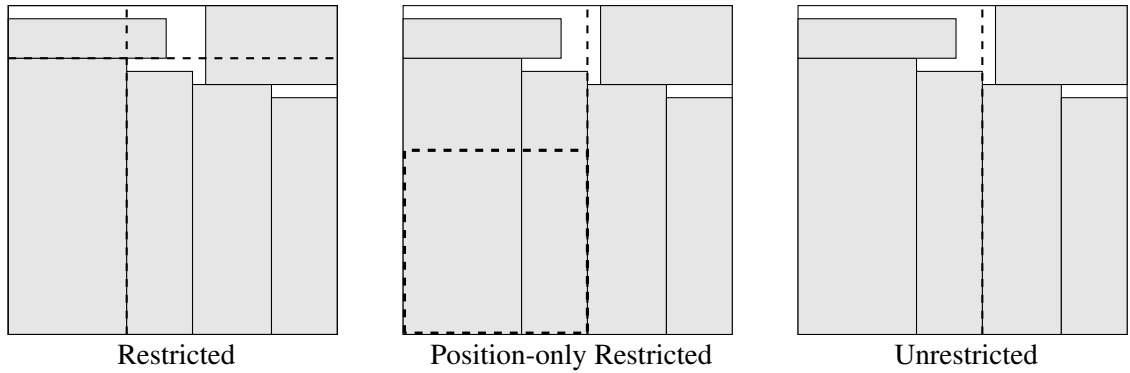

\citet{silva:2010}~proposes a mathematical formulation for the two-stage and three-stage restricted cutting stock problems.
The formulation was not named by its authors; hence, in this text, it will be referred to as SAV (from the author's surname initials: Silva, Alvelos, and Valério).
The SAV is very similar to the FMT, which is examined in~\cref{sec:02methods}.
In fact, the SAV may be seen as an FMT variant that uses POCs instead of BGCs.
The limitation to two- and three-stage problems comes from the cut-and-plate enumeration.
If the enumeration is not stopped at a specific stage, the SAV immediately supports unlimited stages.
Essentially, the proposed change is to: hybridise the FMT with the SAV, replacing BGCs with POCs \emph{only} when doing so cannot lead to loss of optimality for the unrestricted problem.

\subsection{Implementation details}

As seen in the last section, a POC (\emph{piece-oulining cut}) is prefered over a BGC (\emph{basic guillotine cut}) if it is guaranteed that replacing the latter by the former will not cause loss of optimality.
For the restricted problem, the typical set of horizontal (vertical) cutting positions is just the set of unique values in~\(l_i\) (\(w_i\)) for every piece type~\(i\) that fits into the plate.
Besides one corner case, each single guillotine cut at such positions may be replaced by the corresponding POC.
The corner case arises in cutting positions that come from a length (or width) value shared by two or more pieces.
In this case, a single guillotine cut needs to be replaced by two or more POCs, depending on how many pieces share the corresponding cutting position; otherwise, the model would lose the capability to produce that piece type.

For the unrestricted problem, the exact set of cutting positions often varies between different solving methods.
There are many discretisation procedures and reductions to be applied either after or during such discretisations.
The author will focus on the discretisations and reductions procedures employed by the formulations of~\citet{furini:2016} and~\citet{becker:2021} (this is, the FMT and BBA formulations).
The base discretisation employed by both FMT and BBA is straightforward: \(q\) is an horizontal (vertical) cutting position if, and only if, there is a demand-abiding linear combination of lengths (widths) from pieces that fit into the respective plate.
This cutting position set is a superset of the restricted set (from the last paragraph) and will be referred to as the \emph{base unrestricted set}.
Suppose a cutting position allows for the associated BGC to be replaced by (one or more) POCs without loss of optimality for the \emph{unrestricted problem}.
In that case, the cutting position (and, by extension, the BGC) is said to be \emph{replaceable}.

A cutting position must meet two conditions to be deemed replaceable.
The first condition is that a cutting position of the same orientation for the same plate exists in the restricted set.
This first condition is necessary because, otherwise, the cut is not outlining a piece, i.e., there is no corresponding piece type to be extracted from the first child plate.
The second condition is that such horizontal (vertical) cutting positions cannot be obtainable by a demand-abiding linear combination of two or more piece lengths (widths), considering only the pieces that fit into the respective plate.
This second condition is necessary because, otherwise, the replaced cut could be necessary for the only optimal cutting pattern of an instance of the unrestricted problem.
An example of this situation can be seen in~\cref{fig:distinctions_restricted_unrestricted} (the middle pattern, i.e., Position-only Restricted).
The middle vertical cut matches a piece width (i.e., it satisfied the first condition); however, if it were replaced by a POC associated with the square piece, it would be impossible to obtain the unrestricted optimal solution (that needs a BGC at the same position).

The two reductions proposed in~\citet{furini:2016}, \emph{Cut-Position} and \emph{Redundant-Cut}, cause little change to the replaceable cutting positions. 
The only cutting positions removed by \emph{Cut-Position} are the ones not in the restricted set and, therefore, not replaceable. 
\emph{Redundant-Cut} may remove a replaceable cutting position.
However, as the name says, it only removes cuts that have an alternative cutting position that can be employed for the same effect (which is kept).
The predicted alternative cutting position will always be replaceable too, and replacing it with one or more POCs never requires adding back the cuts removed by Redundant-Cut.
Also, the BBA formulation never has trim cuts like those removed by Redundant-Cut, so this enhancement is superseded by it.

BBA adds extraction variables and reduces the base unrestricted set to only the cutting positions up to the midplate.
The extraction variables can be seen as POCs in which both top and right residual plates are guaranteed to be waste; therefore, extractions are not subject to be replaced by POCs.
BBA requires us to differentiate between \emph{binding} and \emph{non-binding} POCs.
A POC is \emph{non-binding} if the piece-sized plate it obtains may be regarded as waste; conversely, if the piece-sized plate must be sold as a piece, then the POC is \emph{binding}.
A \emph{binding} POC cannot be employed if an extra copy of the associated piece type would lead to disrespecting the demand constraint.
If replaceable cuts in the BBA formulation are replaced by \emph{binding} POCs, then there are cases in which loss of optimality occurs.
The cause of this loss of optimality is that, in BBA, a replaceable cut may be required by an optimal solution even if there is no demand for the associated piece.
These seemingly unnecessary cuts aim to reduce the plate size until a large piece can be obtained from the plate through an extraction variable.
A complete example follows.

\begin{example}{Hybridised BBA with binding cuts loses optimality.}
Consider the following G2KP instance: \(L = 100\), \(W = 100\), \(l = [100, 100]\), \(w = [1, 51]\), \(u = [1, 1]\), and \(p = [1, 1]\).
The optimal solution clearly must contain the only available copy of each of the two piece types.
In BBA, there is no cut after the midplate; consequently, a vertical cut at position~\(51\) is ruled out.
The only possibility is a vertical cut at position~\(1\) for which the first child plate could be immediately sold as the single copy of the first piece type.
The second child plate (\(100\)x\(99\)) also does not have an extraction variable for the immediate extraction of the second piece type (\(100\)x\(51\)).
The BBA determines that for an extraction variable to exist ``[...] the plate cannot fit an extra piece (of any type).'' and the first piece type fits together with the second in the \(100\)x\(99\) plate.
Again, a vertical cut at position~\(51\) is unavailable because it happens after midplate.
Consequently, BBA forces the optimal solution to create~\(50\) plates of size~\(100\)x\(1\), one of which will be sold as a piece, and the rest considered waste.
The second child of the~\(50\)th (and last) cut has size~\(100\)x\(50\), and it can be sold as the second piece type because an extraction variable is now available (i.e., the previously quoted condition does not apply anymore).
The adoption of \emph{binding} POCs makes it impossible for BBA to obtain an optimal solution for this example.
The reason is that there are not~\(50\) copies of the first piece type, but these would be needed by the~\(50\) binding piece-outlining cuts necessary to obtain an optimal solution.
The same problem does not arise if the POCs are not binding.
\end{example}

The corner case of two or more pieces sharing the same length/width needs to be considered in the unrestricted problem too, but with a subtle distinction.
In the restricted problem, replacing every single guillotine cut by POCs also brings the advantage of not needing an additional mechanism to enforce the problem definition (i.e., to guarantee piece extractions from the first child plates).
However, in the unrestricted problem, the choice between replacing a single guillotine cut by multiple POCs, or keeping it as a guillotine cut, is just a trade-off between model size and model symmetry.
Therefore, this work further distinguishes between two implementations of hybridisation.
The \emph{conservative} hybridisation substitutes each replaceable horizontal (vertical) BGC with one horizontal-first (vertical-first) POC that is associated with the single piece type that matches the length (width) of the cutting position (and that fits into the respective plate).
If two or more fitting piece types match the cutting position, the conservative hybridisation leaves the BGC unchanged.
The \emph{aggressive} hybridisation substitutes each replaceable horizontal (vertical) BGC with one horizontal-first (vertical-first) POC \emph{for each piece type} that matches its length (width) (and that fits into the respective plate).

We believe it is excessive to present the full formulation and implementation details for every combination of the FMT/BBA formulation with conservative/aggressive hybridisation and binding/non-binding POCs.
The experiments in the next section only consider the BBA with conservative/aggressive hybridisation and non-binding POCs.
The distinction between conservative and aggressive hybridisation is mostly made at the cut and plate enumeration; however, because of an unfortunate notation detail explained further, it is less troublesome to present an accurate formulation of the conservative hybridisation than the aggressive hybridisation.
In light of this, we chose to fully present the conservative hybridised BBA formulation with non-binding POCs.
The main differences in implementing other combinations are briefly discussed shortly after.

We start by explaining in detail the {\modelBecker}.
The {\modelBecker} employs the same two reductions of {\modelFMT} (Cut-Position and Redundant-Cut) described in~\cite{furini:2016}, and the cut and plate enumeration is the same except by the two enhancements described below.
The first enhancement is that every intermediary plate for which the length (width) is not a linear combination of the pieces length (width) have its dimensions \emph{reduced} to the closest linear combination; this combines multiple plates that could only pack the same set of pieces into a single plate type.
Such enhancement was already proposed in other contexts~\cite{alvarez:2009,dolatabadi:2012}.
The second enhancement is that {\modelFMT} considered cuts after the middle of a plate, while {\modelBecker} avoids enumerating any cuts after the middle of a plate.
This is done by replacing a variable set from~{\modelFMT}, by a set \(e_{ij}~\forall (i, j) \in E \subset \bar{J} \times J\), representing plate~\(j\) had piece~\(i\) extracted from it and sold.
An extraction variable exists (i.e., \(e_{ij} \in E\)) iff the plate~\(j\) dimensions do not allow to extract \emph{both} piece~\(i\) and another piece.

For convenience, we also define \(E_{i*} = \{ j : \exists~(i, j) \in E \}\) and \(E_{*j} = \{i : \exists~(i, j) \in E \}\).
The set \(O = \{h, v\}\) denotes the horizontal and vertical cut orientations.
The set \(Q_{jo}\) (\(\forall j \in J, o \in O\)) denotes the set of possible cuts (or cut positions) of orientation~\(o\) over plate~\(j\).
The plate~\(0 \in J\) is the original plate, and it may also be in~\(\bar{J}\), as there may exist a piece of the same size as the original plate.
The parameter~\(a\) is a byproduct of the plate enumeration process.
The value of \(a^o_{qkj}\) is the number of plates \(j \in J\) added to the stock if a cut of orientation~\(o \in O\) is carried out at position~\(q \in Q_{jo}\) of a plate~\(k \in J\).

In a valid solution, the value of \(x^o_{qj}\) is the number of times a plate~\(j \in J\) is cut with orientation~\(o \in O\) at position~\(q \in Q_{jo}\); i.e., how much flow is being transported by each edge coming from a plate \oldtext{vertice}\newtext{vertex}.
The plate~\(0 \in J\) is the original plate, and it may also be in~\(\bar{J}\), as there may exist a piece of the same size as the original plate.

\begin{align}
\bm{max.} &\sum_{(i, j) \in E} p_i e_{ij} \label{eq:objfun}\\
\bm{s.t.} &\specialcell{\sum_{o \in O}\sum_{q \in Q_{jo}} x^o_{qj} + \sum_{i \in E_{*j}} e_{ij} \leq \sum_{k \in J}\sum_{o \in O}\sum_{q \in Q_{ko}} a^o_{qkj} x^o_{qk} \hspace*{0.05\textwidth} \forall j \in J, j \neq 0,}\label{eq:plates_conservation}\\
	    & \specialcell{\sum_{o \in O}\sum_{q \in Q_{0o}} x^o_{q0} + \sum_{i \in E_{*0}} e_{i0} \leq 1 \hspace*{\fill},}\label{eq:just_one_original_plate}\\
            & \specialcell{\sum_{j \in E_{i*}} e_{ij} \leq u_i \hspace*{\fill} \forall i \in \bar{J},}\label{eq:demand_limit}\\
	    & \specialcell{x^o_{qj} \in \mathbb{N}^0 \hspace*{\fill} \forall j \in J, o \in O, q \in Q_{jo},}\label{eq:trivial_x}\\
            & \specialcell{e_{ij} \in \mathbb{N}^0 \hspace*{\fill} \forall (i, j) \in E.}\label{eq:trivial_e}
\end{align}

The objective function maximizes the profit of the extracted pieces~\eqref{eq:objfun}.
Constraint~\eqref{eq:plates_conservation} guarantees that for every plate~\(j\) that was further cut or had a piece extracted from it (left-hand side), there must be a cut making available a copy of such plate (right-hand side).
One copy of the original plate is available from the start~\eqref{eq:just_one_original_plate}, and it can be either have a piece directly extracted or be further cut.
The amount of extracted copies of some piece type must respect the demand for that piece type (a piece extracted is a piece sold)~\eqref{eq:demand_limit}.
Finally, the domain of all variables is the non-negative integers~\eqref{eq:trivial_x}-\eqref{eq:trivial_e}.

The \emph{conservative hybridised BBA formulation with non-binding} POCs requires a new set of variables, a new set of constraints, a new parameter, and some minor changes to the objective function and some of the existing constraints.
Both the new set of variables and the new set of constraints are bounded by \(|\bar{J}|\) and, therefore, cause only a small relative increase to the model size of a non-trivial instance.
The notation for the new variable and parameter set follows:

\begin{description}
\item [\(s_i\)] \(\forall i \in \bar{J}\) -- Integer variable. Indicates how many piece-sized plates obtained by POCs associated with piece type~\(i\) were sold as pieces of type~\(i\). By \emph{sold} the author means they contributed to the objective function and were accounted for by the demand constraint.
\item [\(h^o_{qji}\)] \(\forall o \in O, j \in J, q \in Q_{jo}, i \in \bar{J}\) -- Binary parameter. Byproduct of the cut and plate enumeration. It has value one if cut~\(x^o_{qj}\) is a POC that produces a piece-sized plate corresponding to piece~\(i\); zero otherwise.
\end{description}

The parameter \(a^o_{qkj}\) from \{\modelBecker\} is exactly the same for BGCs and has a slightly different meaning for POCs.
The difference is that the \(j\) (obtained child plate) is always either the top or right residual (i.e., the POC version of the first and second child) and that both \(o\) (orientation) and \(q\) (cutting position) refer only to the first constituting cut of a POC; the meaning of \(k\) (parent plate) is left unchanged.
The set of variables representing cuts (\(x^o_{qj}\)) also does not need change, as \(h^o_{qji}\) fills the need to identify POCs and their associated piece types. Consequently, the constraints~\eqref{eq:plates_conservation} and~\eqref{eq:just_one_original_plate} presented below are the same as the non-hybridised formulation.

The constraint~\eqref{eq:piece_sized_plates} guarantees each piece-sized plate available~(\(s_i\)) comes from an actual POC.
The remaining changes consist into adding \(s_i\) to the demand constraint~\eqref{eq:hyb_demand} (which avoids overproduction without prohibiting the POCs themselves) and to the objective function~\eqref{eq:hyb_obj} (which allows piece-sized plates to be sold).


\begin{align}
\bm{max.} &\sum_{(i, j) \in E} p_i e_{ij} + \sum_{i \in \bar{J}} p_i s_i \label{eq:hyb_obj}\\
\bm{s.t.} &\specialcell{\sum_{o \in O}\sum_{q \in Q_{jo}} x^o_{qj} + \sum_{i \in E_{*j}} e_{ij} \leq \sum_{k \in J}\sum_{o \in O}\sum_{q \in Q_{ko}} a^o_{qkj} x^o_{qk} \hspace*{0.05\textwidth} \forall j \in J, j \neq 0,}\tag{\ref{eq:plates_conservation}}\\
	    & \specialcell{\sum_{o \in O}\sum_{q \in Q_{0o}} x^o_{q0} + \sum_{i \in E_{*0}} e_{i0} \leq 1 \hspace*{\fill},}\tag{\ref{eq:just_one_original_plate}}\\
            & \specialcell{s_i \leq \sum_{j \in J}\sum_{o \in O}\sum_{q \in Q_{jo}} h^o_{qji} x^o_{qj} \hspace*{\fill} \forall i \in \bar{J},}\label{eq:piece_sized_plates}\\
            & \specialcell{s_i + \sum_{j \in E_{i*}} e_{ij} \leq u_i \hspace*{\fill} \forall i \in \bar{J},}\label{eq:hyb_demand}\\
	    & \specialcell{x^o_{qj} \in \mathbb{N}^0 \hspace*{\fill} \forall j \in J, o \in O, q \in Q_{jo},}\tag{\ref{eq:trivial_x}}\\
            & \specialcell{e_{ij} \in \mathbb{N}^0 \hspace*{\fill} \forall (i, j) \in E}\tag{\ref{eq:trivial_e}}\\
            & \specialcell{s_{i} \in \mathbb{N}^0 \hspace*{\fill} \forall i \in \bar{J}.}\label{eq:trivial_s}
\end{align}

The aforementioned unfortunate notation detail is the incapability of denoting two or more different cuts~\(x^o_{qj}\) with the same orientation~\(o\) and the same cutting position~\(q\) over the same plate~\(j\).
Therefore, if the aggressive hybridisation replaces a BGC with two or more POCs, then the notation does not allow us to differentiate between them. The~\(a^o_{qkj}\) parameter also needs to change, as it suffers from the same problem.
The aggressive hybridisation code deals with this problem by having unique single indexes for each cut and reverse indexes from each cut property (like orientation or cutting position) to the cuts themselves; this way, the cuts are not limited to the uniqueness of some property combination.

A trivial way to change the presented formulation to use \emph{binding cuts} is to change the constraint set~\eqref{eq:piece_sized_plates} to require equality.
However, the binding cuts can also be implemented without the new variable and constraint sets.
The term \(s_i\) could just be replaced by \(\sum_{j \in J}\sum_{o \in O}\sum_{q \in Q_{jo}} h^o_{qji} x^o_{qj}\) in both the objective function and the demand constraint.
Both mentioned ways to implement binding cuts work on the FMT formulation, which does not have the same loss of optimality problem as the BBA.

\subsection{Experimental results}

In these experiments, for reasons explained further ahead, each instance was solved ten times with ten distinct solver seeds.
The BBA configuration includes the applicable reductions previously discussed (i.e., Cut-Position and Plate-Size Normalisation).
The barrier algorithm was used to solve the root node as usual.
Only the Gurobi solver is used in these experiments.
No runs ended in timeout.
The computer setup, as well as the Julia and Gurobi versions/parameters, are the same as described in~\cref{sec:02methods}, but the model was built and solved in the same process (i.e., there was no writing and reading from MPS file), and no time limit was enforced.
Three variants are scrutinised: no hybridisation (N. H.), conservative hybridisation (C. H., avoids increasing model size), and aggressive hybridisation (A. H., always hybridise, even if it leads to an increase of the model size).
The first dataset considered is FMT59 (solved as G2KP), and the second is CJCM (solved as G2OPP\footnote{The Guillotine 2D Orthogonal Packing Problem is the decision problem related to the G2KP. It consists in proving that is possible (or impossible) to pack \emph{every} piece into the original plate. The adaptation con}).

\Cref{tab:g2kp_hyb_summary} shows that both C. H. and A. H. had a similar impact on the total solving time (i.e., a reduction of \(\approx\)20\%).
A. H. had slightly better timings despite the considerable increase in the number of cuts.
C. H. slightly reduces the number of cuts.
Both C. H. and A. H. have almost no effect on the number of plates (or extractions variables).
The percentage of hybridised cuts (\emph{h \%}) and hybridised cuts with just one residual (\emph{k \%}) show that the new reductions changed a very significant part of the models.
The number of instances with the lowest averages shows that N. H. is the best option for most instances.

\begin{table}
\caption{
Summary of hybridisation impact over BBA formulation and FMT59 dataset.
\emph{T. T.} (Total Time) -- sum of the mean time of all instances, in seconds; \emph{\(\Delta\) B. T.} (Distance from the Best Time) -- sum of the difference in mean time between the respective variant and the variant with the lowest mean time for the same instance, in seconds, i.e., if all variants ran in parallel and had average time, how much time the runs of the respective variant would spend after another thread already finished; \emph{\#b} (best) -- number of instances in which the respective variant had the lowest (best) average time among the variants; \emph{\#extr.} -- total number of extraction variables (considering one model per instance); \emph{\#cuts.} -- total number of cut variables (considering one model per instance); \emph{h \%} -- percentage of \#cuts that were hybridised; \emph{k \%} -- percentage of \#cuts that were not only hybridised but also discarded the second child of the second constituting cut as waste, i.e., the POC resulted in the piece-sized plate and \emph{one} other plate; \emph{\#plates} -- total number of plates (considering one model per instance).
}
\label{tab:g2kp_hyb_summary}
\begin{center}
\begin{tabular}{lrrrrrrrr}
\hline\hline
\textbf{Variant} & \textbf{T. T.} & \textbf{\(\Delta\) B. T.} & \textbf{\#b} & \textbf{\#extr.} & \textbf{\#cuts} & \textbf{h \%} & \textbf{k \%} & \textbf{\#plates} \\\hline
N. H. & 6,681 & 1,703 & 27 & 186,536 & 2,498,801 & -- & -- & 113,822 \\
C. H. & 5,468 & 489 & 22 & 184,067 & 2,496,421 & 41 & 18 & 113,373 \\
A. H. & 5,447 & 469 & 10 & 184,050 & 3,021,911 & 67 & 28 & 113,366 \\\hline\hline
\end{tabular}
\end{center}
\end{table}

A closer look into the data, see \Cref{tab:g2kp_hyb_selected_instances}, reveals that most time difference comes from a few hard instances.
In fact, the instance Hchl4s alone is responsible for most of the difference, with okp2 having about half its relevance and the rest of the instances considerably less impact.
The number of variables hybridised (\emph{H} columns) does not seem a good indicator of how impacted the solving times will be.
However, if N. H. spends most of the time solving the root node (low Non-Root \%), C. H. and A. H. generally do not bring great time improvements.
As the most significant reductions often occur in instances that spend less than 1\% of the time in the root node, the time distribution does not change significantly.
An exception is CHL1s which shows that C. H. seems to impact not the time at the root node but the time at the B\&B, as expected from a symmetry breaking-enhancement.

\begin{table}[!ht]
\caption{
Impact of BBA hybridisation in FMT59 instances taking more than 10s.
\emph{H (\%)} -- the percentage of all variables (i.e., cut and extraction) that were hybridised for C. H. and A. H.; \emph{Mean Time (s/\%)} -- mean time spent to solve the instance, in seconds for N. H., and in a percentage relative to N. H. for both C. H. and A. H.; \emph{CV} -- coefficient of variation (also known as relative standard deviation) is the standard deviation for N. H., C. H., and A. H., divided by their respective means (CV is always a percentage); \emph{Non-Root (\%)} -- the percentage of the total time which was \emph{not} spent solving the root node.
}
\label{tab:g2kp_hyb_selected_instances}
\begin{center}
\begin{tabular}{lrrrrrrrrrrr}
\hline\hline
& \multicolumn{2}{c}{H (\%)} & \multicolumn{3}{c}{Mean Time (s/\%)} & \multicolumn{3}{c}{CV (\%)} & \multicolumn{3}{c}{Non-Root (\%)} \\\cmidrule(lr){2-3}\cmidrule(lr){4-6}\cmidrule(lr){7-9}\cmidrule(lr){10-12}
Inst. & C & A & N (s) & C (\%) & A (\%) & N & C & A & N & C & A \\\hline\hline
Hchl4s & 46 & 57 & 3,657 & 80 & \bestcolumnemph{69} & 76 & 35 & 25 & >99 & >99 & >99 \\
okp2 & 22 & 22 & 1,844 & \bestcolumnemph{77} & 88 & 21 & 19 & 44 & >99 & >99 & >99 \\
Hchl7s & 50 & 77 & 428 & \bestcolumnemph{100} & 134 & 18 & 26 & 19 & 25 & 36 & 44 \\
okp3 & 33 & 49 & \bestcolumnemph{209} & 113 & 122 & 29 & 38 & 35 & >99 & >99 & >99 \\
Hchl8s & 17 & 35 & 253 & 68 & \bestcolumnemph{48} & 73 & 45 & 44 & >99 & >99 & >99 \\
Hchl3s & 46 & 57 & 39 & \bestcolumnemph{93} & 130 & 11 & 21 & 85 & 80 & 82 & 87 \\
Hchl2 & 25 & 77 & 45 & \bestcolumnemph{89} & 144 & 5 & 8 & 18 & 49 & 50 & 65 \\
CHL6 & 45 & 68 & 39 & \bestcolumnemph{91} & 98 & 13 & 15 & 14 & 48 & 46 & 44 \\
CHL7 & 23 & 78 & \bestcolumnemph{30} & 105 & 116 & 8 & 8 & 5 & 26 & 27 & 44 \\
Hchl6s & 51 & 80 & \bestcolumnemph{36} & 103 & 103 & 3 & 5 & 3 & 14 & 18 & 27 \\
CHL1 & 32 & 64 & \bestcolumnemph{26} & 115 & 120 & 14 & 16 & 14 & 68 & 71 & 72 \\
CHL1s & 32 & 64 & 21 & \bestcolumnemph{75} & 121 & 14 & 13 & 6 & 62 & 39 & 61 \\
okp5 & 11 & 12 & 12 & \bestcolumnemph{97} & 99 & 1 & 2 & 2 & 19 & 21 & 21 \\\hline\hline
\end{tabular}
\end{center}
\end{table}

The coefficient of variation of the analysed instances reveals the reason for multiple runs with distinct seeds: the difference between two runs of the same variant but distinct seeds is often larger than the difference between the means of two distinct variants.
Intuitively, breaking symmetries should reduce the variance of the timings.
By cutting symmetric branches, there is less opportunity for a solver seed to traverse multiple equivalent branches with a good relaxation (but bad primal) before finding a primal solution that cuts all such branches.
In fact, when C. H. and A. H. achieve a considerable (20\% or more) reduction of the mean time, the coefficient of variation (which is relative to the mean time) generally shows a reduction.
However, a more general effect, i.e., a higher percentage of model hybridisation (H\%) leading to lower CV (or mean time), is not observed.
Exactly \emph{which} variables were hybridised probably have more impact than \emph{how many} variables.
Finally, the reduction of variance, while positive if the objective is to compare solution methods, may be unwanted when solving the same problem in parallel.
For example, if two methods have similar mean times, the method with the most variance will probably have a thread find the optimal solution first.

In general, C. H. either had a negligible difference from N. H. or provided some considerable benefit (especially for instances with longer running times).
For solving mostly small instances, the extra complexity brought to the formulation may not be worthwhile, but the change does not bring much risk of worsening the results.
The A. H. has the best reduction of mean time and CV for both Hchl4s and Hchl8s, but it has a consistently bad performance for small instances with many pieces sharing the same length or width.
There is no clear class of instances for which it can consistently outperform C. H. (or N. H.).

\section{Conclusions and future work}

We believe our work gives future practicioners a solid base to start from.
It reviews and compares empirically all known formulations for the {\myproblem} in the literature.
The {\modelBecker} solved most selected instances, but fails to provide good solutions for the hardest instances; while {\modelHierarchical}, {\modelImplicit}, and {\modelOrigami} had varying deegres of success and, in general, they do not prove optimality but provide good quality solutions even for the hardest instances. The {\modelBCE}, {\modelGrid}, and {\modelFMT} had trouble with small instances and were discarded from further comparison.
We found out that Gurobi has a small but consistent advantage over CPLEX, and the choice between CPLEX or Gurobi should not be able to bias against a specific formulation.
Our studies also revealed an inconsistency of the T instance dataset, for which guillotined and non-guillotined optima should match, but it does not.
We also propose an hybridisation of {\modelBecker} which consistently improves its run time in long-running instances.
The hybridisation comes from combining {\modelBecker} with the model from~\citet{silva:2010} (for a more restricted problem) showing that still there are ways to combine the ideas present in the literature to improve on existing models.
Considering that {\modelBecker} is state of the art, and that our hybridisation seem to either have no impact or positive impact, then our hybridisation supersedes it (except by the extra complexity of implementation).

The MILP formulations for the~{\myproblem} are recent and we expect this work to incentive further work in the subject.
Some directions for future research follows.
The APT dataset is yet a challenge for all known formulations.
The results of {\modelBecker} and our hybridisation shows the value of incremental improvements.
For pseudo-polynomial formulations, reducing the model size is key, as upper bounds are already great.
On the other hand, more compact formulations would benefit from aggregating better upper bounds, especially for the rotation variant.

\section{Acknowledgments}

This study was financed in part by the Coordenação de Aperfeiçoamento de Pessoal de Nível Superior - Brasil (CAPES) - Finance Code 001

\bibliographystyle{itor.bst}
\bibliography{manuscript}

\end{document}